\newcommand{\R}{\mathbb{R}}
\newcommand{\C}{\mathbb{C}}
\newcommand{\N}{\mathbb{N}}
\newcommand{\ep}{\varepsilon}
\newcommand{\pa}{\partial}
\DeclareMathOperator{\lifespan}{LifeSpan}
\newtheorem{theorem}{Theorem}[section]
\newtheorem{lemma}[theorem]{Lemma}
\theoremstyle{remark}
\newtheorem{remark}{Remark}[section]
\theoremstyle{definition}
\newtheorem{definition}{Definition}[section]
\numberwithin{equation}{section}
\def\@cite#1#2{[{{\bfseries #1}\if@tempswa , #2\fi}]}
\begin{document}
\begin{center}
\Large{{\bf
Remark on 
upper bound for lifespan 
of solutions to 
semilinear \\
evolution equations
in a two-dimensional exterior domain}}
\end{center}

\vspace{5pt}

\begin{center}
Masahiro Ikeda%
\footnote{
Department of Mathematics, Faculty of Science and Technology, Keio University, 3-14-1 Hiyoshi, Kohoku-ku, Yokohama, 223-8522, Japan/Center for Advanced Intelligence Project, RIKEN, Japan, 
E-mail:\ {\tt masahiro.ikeda@keio.jp/masahiro.ikeda@riken.jp}}
and
Motohiro Sobajima%
\footnote{
Department of Mathematics, 
Faculty of Science and Technology, Tokyo University of Science,  
2641 Yamazaki, Noda-shi, Chiba, 278-8510, Japan,  
E-mail:\ {\tt msobajima1984@gmail.com}}
\end{center}

\newenvironment{summary}{\vspace{.5\baselineskip}\begin{list}{}{%
     \setlength{\baselineskip}{0.85\baselineskip}
     \setlength{\topsep}{0pt}
     \setlength{\leftmargin}{12mm}
     \setlength{\rightmargin}{12mm}
     \setlength{\listparindent}{0mm}
     \setlength{\itemindent}{\listparindent}
     \setlength{\parsep}{0pt}
     \item\relax}}{\end{list}\vspace{.5\baselineskip}}
\begin{summary}
{\footnotesize {\bf Abstract.}
In this paper we consider 
the following initial-boundary  value problem 
with the power type nonlinearity 
$|u|^p$ with $1<p\leq 2$
in a two-dimensional  exterior domain
\begin{equation}\label{our}
\begin{cases}
\tau \pa_t^2 u(x,t)-\Delta u(x,t)+e^{i\zeta}\pa_t u(x,t)=\lambda |u(x,t)|^p, 
& (x,t)\in \Omega \times (0,T),
\\
u(x,t)=0, 
& (x,t)\in \pa\Omega \times (0,T),
\\
u(x,0)=\ep f(x), 
& x\in \Omega,
\\
\pa_t u(x,0)=\ep g(x), 
& x\in \Omega,
\end{cases}
\end{equation}
where $\Omega$ is given by $\Omega=\{x\in\R^2\;;\;|x|>1\}$, 
$\zeta\in [-\frac{\pi}{2}, \frac{\pi}{2}]$, 
$\lambda\in\C$ and $\tau\in \{0,1\}$ switches 
the parabolicity, dispersivity and hyperbolicity. 
Remark that $2=1+2/N$ is well-known as 
the Fujita exponent. If $p>2$, then 
there exists a small global-in-time solution
of \eqref{our} for some initial data small enough 
(see Ikehata \cite{Ikehata05exterior}), 
and if $p<2$, then global-in-time solutions 
cannot exist for any positive initial data
(see Ogawa--Takeda \cite{OT09} and Lai--Yin \cite{LY17}).
The result is that for given initial data $(f,\tau g)\in H^1_0(\Omega)\times L^2(\Omega)$ satisfying 
$(f+\tau g)\log |x|\in L^1(\Omega)$ with some requirement, 
the solution blows up at finite time, 
and moreover, the upper bound for lifespan of solutions 
to \eqref{our} is given as the following {\it double exponential type} when $p=2$:
\[
\lifespan(u)\leq \exp[\exp(C\ep^{-1})] . 
\]
The crucial idea is to use test functions which approximates the harmonic function 
$\log |x|$ satisfying Dirichlet boundary condition and 
the technique for derivation of lifespan estimate in 
\cite{IkedaSobajima3}. 
}
\end{summary}

{\footnotesize{\it Mathematics Subject Classification}\/ (2010): %
35K58, 
35L71, 
35Q55, 
35Q56. 
}

{\footnotesize{\it Key words and phrases}\/: %
Evolution equations, exterior domain, two dimension,
Small data blow-up, Upper bound of lifespan, 
Fujita exponent, double exponential type. 
}

\section{Introduction}
In this paper we consider 
the following initial-boundary  value problem 
with the power type nonlinearity 
$|u|^p$ with $1<p\leq 2$
in a two-dimensional  exterior domain
\begin{equation}\label{ndw}
\begin{cases}
\tau \pa_t^2 u(x,t)-\Delta u(x,t)+e^{i\zeta}\pa_t u(x,t)=\lambda |u(x,t)|^p, 
& (x,t)\in \Omega \times (0,T),
\\
u(x,t)=0, 
& (x,t)\in \pa\Omega \times (0,T),
\\
u(x,0)=\ep f(x), 
& x\in \Omega,
\\
\pa_t u(x,0)=\ep g(x), 
& x\in \Omega,
\end{cases}
\end{equation}
where $\Omega$ is given by $\Omega=\{x\in\R^2\;;\;|x|>1\}$, 
$\zeta\in [-\frac{\pi}{2}, \frac{\pi}{2}]$, 
$\lambda\in\C$ and $\tau\in \{0,1\}$ switches 
the parabolicity, dispersivity and hyperbolicity. 

First we give some comment for the case of whole space $\R^N$. 
For the semilinear heat equation $\pa_tu-\Delta u =u^{p}$, 
Fujita \cite{Fujita66} found blowing up solutions 
with small initial data when $p<p_F(N)=1+\frac{2}{N}$. 
The exponent $p_F(N)$ is well-known as the Fujita exponent. 
There are many subsequent papers about further detailed analysis
(see e.g. Hayakawa \cite{Hayakawa73}, 
Kobayashi--Shirao--Tanaka \cite{KST77}, 
Mizoguchi--Yanagida \cite{MY96,MY98}, 
Fujishima--Ishige \cite{FI11,FI12} and their references therein). 
Then Lee--Ni \cite{LN92} 
gave a precise estimate for lifespan of solutions to 
$\pa_tu-\Delta u =u^{p}$ with $u(0)=\ep f\geq 0$ as 
\[
\lifespan(u)\sim 
\begin{cases}
C\ep^{-(\frac{1}{p-1}-\frac{N}{2})^{-1}}
&\text{if}\ 1<p<p_F(N), 
\\
\exp(C\ep^{-(p-1)})
&\text{if}\ p=p_F(N)
\end{cases}
\]
by using the structure of the heat kernel in the whole space.

For the Schr\"odinger equation without gauge invariance 
$i\pa_tu-\Delta u =\lambda|u|^{p}$, 
the study of blowup phenomena for $L^1$-initial data 
has been done in the literature 
(see Ikeda--Wakasugi \cite{IW13}, Fujiwara--Ozawa \cite{FO16} 
and Oh--Okamoto--Pocovnicu \cite{OOParxiv}). 

For the semilinear damped wave equation without gauge invariance 
$\pa_t^2u-\Delta u+\pa_tu =\lambda|u|^{p}$, 
Li--Zhou \cite{LZ95} proved blowup phenomena with lifespan estimates
for the solution with arbitrary small initial data 
when $N=1,2$ and $1<p\leq p_F(N)$. 
Then the same question for $N=3$ is solved by 
Nishihara \cite{Nishihara03Ibaraki}. 
For general dimensional case, 
Todorova--Yordanov \cite{TY01} established existence of 
global solutions with sufficiently small initial data
if $p>p_F(N)$. In the critical case $p=p_F(N)$, 
Zhang \cite{Zhang01} proved blowup phenomena 
for the solution with arbitrary small initial data. 
Ikeda--Ogawa \cite{IO16} gave an estimates of lifespan, 
but the gap between lower and upper bounds was not filled. 
Then Lai--Zhou \cite{LZarxiv} gives the precise 
lifespan estimate for blowup solution with small initial data for $p=p_F(N)$. 

Later, our previous paper \cite{IkedaSobajima3}
deals with all of the above prototype of the evolution equations 
simultaneously and 
proves the upper estimates 
of lifespan of solutions to \eqref{ndw} when $1<p\leq p_F(N)$ 
as the same upper bound as Lee--Ni \cite{LN92};
note that in \cite{IkedaSobajima3}, 
a kind of space-dependent damping in a cone-like domain 
is also dealt with. 

The problem for exterior domain also has been done
for many mathematicians.
Tsutsumi \cite{Tsutsumi83} proved the global existence of 
solutions to the Schr\"odinger equation with gauge invariance 
for $N\geq 3$ and $p\in 2\N$.
Kobayashi--Misawa \cite{KM13} 
deals with nonlinear heat equation 
$\pa_t u -\Delta u = (1+|x|\log (B|x|))^{-1}|u|^{p-1}u$ 
with $N=2$ via Hardy and BMO spaces 
and posed that the critical exponent for 
the nonlinearity is $p=\frac{3}{2}$. 

If we focus our attention to 
the result for $N=2$ with the nonlinearity $|u|^p$, 
Ikehata \cite{Ikehata05exterior} constructed 
small global-in-time solutions 
of nonlinear damped wave equation when $p>2=p_F(2)$. 
If $p<2$, then global-in-time solutions 
cannot exist for compactly supported initial data
(see Ogawa--Takeda \cite{OT09} and Lai--Yin \cite{LY17}).

However, in the authors' knowledge 
there is no previous works dealing with the blowup phenomena 
with lifespan estimates
for \eqref{ndw} in exterior domains 
via test function method which is applicable 
to the critical case and also to the Schr\"odinger equation. 
The aim of this paper is the following. 
For given initial data $(f,\tau g)\in H^1_0(\Omega)\times L^2(\Omega)$ satisfying $(f+\tau g)\log |x|\in L^1(\Omega)$ with some requirement, 
we shall prove that the solution blows up at finite time, 
and moreover, the upper bound for lifespan of solutions 
to \eqref{ndw} with $p=2$ given as the following {\it double exponential type}:
\[
\lifespan(u)\leq \exp[\exp(C\ep^{-1})] . 
\]
The crucial idea is to use test functions which approximates the harmonic function 
$\log |x|$ satisfying Dirichlet boundary condition and 
the technique for derivation of lifespan estimate in 
\cite{IkedaSobajima3}. 

To state our main result, 
we give a definition of solutions to \eqref{ndw} 
as follows.
\begin{definition}\label{def:sol}
We say that 
$u$ is a solution of \eqref{ndw} with $\tau=0$ in $[0,T)$ if 
\[
u\in C([0,T);H_0^1(\Omega))
\cap 
L^p_{\rm loc}(\overline{\Omega}\times [0,T))
\]
with $u(x,0)=\ep f(x)$ and
for every $\psi\in C^2(\Omega\times [0,T)$ with 
${\rm supp}\,\psi\subset\subset \overline{\Omega}\times [0,T))$ 
and $\psi|_{\pa\Omega}=0$ 
\begin{align*}
&
\ep 
e^{i\zeta}\int_{\Omega}
  f(x)\psi(x,0)
\,dx
+
\lambda
\int_0^T
\int_{\Omega}
  |u(x,t)|^p\psi(x,t)
\,dx
\,dt
\\
&
=\int_0^T
\int_{\Omega}
  \Big(\nabla u(x,t)\cdot\nabla \psi(x,t)
   -e^{i\zeta} u(x,t)\pa_t\psi(x,t)
  \Big)
\,dx
\,dt.
\end{align*}
Similarly, 
$u$ is a solution of \eqref{ndw} with $\tau=1$ in $[0,T)$ if 
\[
u\in C([0,T);H_0^1(\Omega))
\cap 
C^1([0,T);L^2(\Omega))
\cap 
L^p_{\rm loc}(\overline{\Omega}\times [0,T))
\]
with $u(x,0)=\ep f(x)$ and
for every $\psi\in C^2(\Omega\times [0,T))$ with 
${\rm supp}\,\psi\subset\subset \overline{\Omega}\times [0,T)$ 
and $\psi|_{\pa\Omega}=0$ 
\begin{align*}
&
\ep 
\int_{\Omega}
  g(x)\psi(x,0)  
\,dx
+
\int_0^T
\int_{\Omega}
  |u(x,t)|^p\psi(x,t)
\,dx
\,dt
\\
&=
\int_0^T
\int_{\Omega}
  \Big(
  \nabla u(x,t)\cdot\nabla \psi(x,t)
  -\pa_t u(x,t)\pa_t\psi(x,t)
  +a(x)\pa_t u(x,t)\psi(x,t)\Big)
\,dx
\,dt.
\end{align*}
\end{definition}
We remark that 
the existence and uniqueness of local-in-time solutions to 
\eqref{ndw} in this sense 
can be verified by the standard way 
via Dirichlet Laplacian $-\Delta$ endowed with the domain 
$H^2(\Omega)\cap H^1_0(\Omega)$ 
and the Gagliardo--Nirenberg inequality 
(see e.g., Cazenave--Haraux \cite{CHbook} 
and Cazenave \cite{Cazenavebook}). 
Then we introduce the lifespan of the solution $u$. 
\begin{definition}\label{def:lifespan}
We denote $\lifespan(u)$ as the maximal existence time of solutions to respective problem \eqref{ndw}. Namely, 
\[
\lifespan(u)=
\sup \{T>0\;;\;\text{$u$ is a unique weak solution of \eqref{ndw} in $[0,T)$}\}. 
\]
\end{definition}

Now we are in a position to state the main result 
in this paper. 

\begin{theorem}\label{main}
Let $\tau\in\{0,1\}$, $\zeta\in [-\frac{\pi}{2},\frac{\pi}{2}]$, $\lambda\in\C\setminus\{0\}$, $1<p<\infty$ and let $u$ be a 
unique solution of \eqref{ndw}. If $p\leq 2$ and 
$f(x)\log |x|, \tau g(x)\log |x|\in L^1(\Omega)$ with 
\[
\int_{\mathcal{C}_{\Sigma}}
  (\tau g(x)+e^{i\zeta}f(x))
\log |x|\,dx\notin\{-\rho\lambda\in \C\;;\;\rho\geq 0\},
\]
then $\lifespan(u)<\infty$. Moreover, 
$\lifespan(u)$ has the following upper bound: 
\begin{align*}
\lifespan(u)
\leq 
\begin{cases}
C\ep^{-\frac{p-1}{2-p}}(\log \ep^{-1})^{p-1}
&\text{if}\ 1<p<2,
\\
\exp\exp\Big(C\ep^{-1}\Big)
&\text{if}\ p=2.
\end{cases}
\end{align*}
\end{theorem}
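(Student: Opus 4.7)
The plan is to adapt the spacetime test-function method of \cite{IkedaSobajima3} to the two-dimensional exterior setting, the essential new ingredient being a smooth compactly supported approximation $\Phi_R$ of the positive harmonic function $\log|x|$ on $\Omega$. Explicitly I take $\Phi_R(x):=\log|x|\,\chi(|x|/R)$ with $\chi\in C^\infty([0,\infty))$ equal to $1$ on $[0,1/2]$ and supported in $[0,1]$; then $\Phi_R$ satisfies the Dirichlet condition on $\partial\Omega$ automatically because $\log 1=0$, $\Phi_R\equiv\log|x|$ on $\{2\leq|x|\leq R/2\}$, and $-\Delta\Phi_R$ is supported in the annulus $\{R/2\leq|x|\leq R\}$, with the two crucial weighted estimates $\int_\Omega |\Delta\Phi_R|^{p'}\Phi_R^{1-p'}\,dx\lesssim R^{2-2p'}\log R$ and $\int_\Omega \Phi_R\,dx\sim R^2\log R$, where $p'=p/(p-1)$. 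Coupled with a time cutoff $\eta_T\in C^\infty([0,\infty))$ satisfying $\eta_T\equiv 1$ on $[0,T/2]$, $\mathrm{supp}\,\eta_T\subset[0,T)$ and $\eta_T'(0)=0$, the test function is $\psi(x,t):=\Phi_R(x)\eta_T(t)^{2p'}$.

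Substituting $\psi$ into Definition \ref{def:sol} and moving all derivatives onto $\psi$ via integration by parts in space (the Dirichlet boundary of $u$ and the compact support of $\Phi_R$ kill the boundary terms) and in time (the flatness of $\eta_T$ at $t=0$ and its vanishing at $t=T$ eliminate the temporal boundary contributions), I arrive at the identity
\[
\ep\int_\Omega(\tau g+e^{i\zeta}f)\Phi_R\,dx+\lambda F(T,R)=J_{\mathrm{sp}}(T,R)+J_{\mathrm{tm}}(T,R),
\]
where $F(T,R):=\int_0^T\!\int_\Omega|u|^p\Phi_R\eta_T^{2p'}\,dxdt$ and $J_{\mathrm{sp}},J_{\mathrm{tm}}$ are error terms localized to $\mathrm{supp}\,\Delta\Phi_R$ and $\mathrm{supp}\,\partial_t^k\eta_T^{2p'}$ respectively. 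The assumption $(f+\tau g)\log|x|\in L^1(\Omega)$ makes the initial integral on the left converge, as $R\to\infty$, to a finite $I_0\in\C$, and the ray-avoidance hypothesis on $I_0$ produces a unimodular $z\in\C$ with $\mathrm{Re}(zI_0)>0$ and $\mathrm{Re}(z\lambda)>0$. Rotating by $z$, taking real parts, and applying Hölder's inequality with weight $\psi$ to each error term yields $|J_{\mathrm{sp}}|+|J_{\mathrm{tm}}|\leq CF^{1/p}(B_{\mathrm{sp}}^{1/p'}+B_{\mathrm{tm}}^{1/p'})$ with $B_{\mathrm{sp}}\sim TR^{2-2p'}\log R$ and $B_{\mathrm{tm}}\sim R^2(\log R)T^{1-p'}$; a $\delta$-Young inequality then absorbs the $\lambda F$ term and produces the master estimate $\ep\leq C(B_{\mathrm{sp}}+B_{\mathrm{tm}})$.

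For $1<p<2$ one has $2-p'<0$, the two error terms balance at the parabolic scaling $R=T^{1/2}$ to give the common value $T^{2-p'}\log T$, and inverting $\ep\lesssim T^{2-p'}\log T$ for $T$ yields the first part of Theorem \ref{main}. The critical exponent $p=2$ is genuinely harder: since $2-p'=0$, every power-type choice $R=T^\alpha$ collapses $B_{\mathrm{sp}}+B_{\mathrm{tm}}$ to the single-logarithmic quantity $\log T$, and the one-shot master estimate then gives only $\ep\leq C\log T$, yielding a lower bound on $\lifespan(u)$ rather than the desired upper one. The argument must be upgraded in the spirit of \cite{IkedaSobajima3} by iterating at a second scale: one uses the companion bound $F(T,R)\leq C(B_{\mathrm{sp}}+B_{\mathrm{tm}})$, valid for the same choice of test function, to obtain sharp control on the weighted $L^2$-norm of $u$, feeds this back through a second test function whose spatial scale grows super-polynomially in $T$, and extracts the additional exponential gain that converts the inner $\log T$ into $\log\log T$. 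The outcome is $\ep\lesssim(\log\log T)^{-1}$, equivalent to the asserted bound $\lifespan(u)\leq\exp\exp(C/\ep)$.

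The principal obstacle is precisely the critical case: extracting the double-logarithmic decay from a single-logarithmic master estimate requires the two-scale iteration just described and is the only reason the bound is double- rather than single-exponential. Subordinate technical issues include the $\tau=1$ case, where the second-order time derivative produces an extra temporal error of order $R^2(\log R)T^{1-2p'}$ that is dominated by the first-order damping contribution $R^2(\log R)T^{1-p'}$ so the analysis proceeds in parallel with $\tau=0$, and the uniform handling of $\zeta\in[-\pi/2,\pi/2]$ and complex $\lambda$ via the rotation-by-$z$ argument, which is possible exactly because the ray-avoidance hypothesis on $I_0$ forbids $I_0$ and $\lambda$ from being anti-parallel.
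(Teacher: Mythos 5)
Your subcritical argument is essentially sound (modulo the standard technicality that the spatial cutoff must enter as a power $\chi^{2p'}$ so that $|\Delta\Phi_R|^{p'}\Phi_R^{1-p'}$ stays bounded where $\chi$ vanishes), and you have correctly located the obstruction at $p=2$: with the product ansatz $\Phi_R(x)\eta_T(t)^{2p'}$ one has $B_{\mathrm{sp}}\cdot B_{\mathrm{tm}}\gtrsim(\log R)^2$, so no choice of $R$ makes the one-shot estimate decay in $T$. The gap is that your proposed repair of the critical case is not a proof. A ``second test function whose spatial scale grows super-polynomially in $T$'' cannot work in isolation: the same computation shows that \emph{every} single additional scale returns at best a quantity of size $\log R$, and feeding the companion bound $F\lesssim\log T$ back through one more application of H\"older gains only a bounded factor, not the exponential improvement needed to turn $\log T$ into $\log\log T$. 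What is actually required is an argument over a \emph{continuum} of scales, and this is precisely the content of the paper's Lemma \ref{key}, which is absent from your outline: setting $y(r)=\iint_{P(r)}w\psi_r^*$ one shows $\int_0^R y(r)r^{-1}\,dr\le(\log 2)\iint_{P(R)}w\psi_R$, so the family of estimates indexed by $R$ becomes a nonlinear differential inequality for $Y(R)=\int_0^R y(r)r^{-1}dr$, namely $(\delta+cY(R))^p\lesssim R(\log R)^{\kappa(p-1)}Y'(R)$; integrating it from $R_1$ to $T$ in the case $\theta=0$, $\kappa(p-1)=1$ gives $\log\log T\le C\delta^{-(p-1)}$ and hence the double exponential bound. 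Without this (or an equivalent multi-scale mechanism carried out in full), the $p=2$ case of Theorem \ref{main} --- which is the main assertion of the paper --- is not established.

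A structural point that makes the ODE argument run smoothly, and which your separated cutoff obscures, is the paper's choice of a single spacetime cutoff $\psi_R(x,t)=[\eta(((|x|-1)^2+t)/R)]^{2p'}$ multiplied by the \emph{uncut} weight $\Phi(x)=\log|x|$: the one parameter $R$ simultaneously localizes in $x$ and $t$ (so the whole estimate is a function of a single variable, as the ODE lemma requires), and the elementary inequality $1-\frac{1}{|x|}\le\log|x|$ lets $|\nabla\psi_R|$ be absorbed into the logarithmic weight. With your two independent parameters $R$ and $T$ there is no natural single-variable quantity to differentiate, so even granting the idea of iterating over scales you would first have to recouple them. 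I also note that your subcritical bookkeeping yields $T\lesssim\ep^{-\frac{p-1}{2-p}}(\log\ep^{-1})^{\frac{p-1}{2-p}}$, which is what the paper's own lemma produces as well; the logarithm exponent $p-1$ stated in the theorem does not follow from either computation as written.
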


\begin{remark}
If $N\geq 3$, then we can take $1-|x|^{2-N}$ (instead of $\log |x|$) as a 
harmonic function satisfying Dirichlet boundary condition on $\pa\Omega$. 
Using this function as a test function, 
we can prove 
\begin{align*}
\lifespan(u)
\leq 
\begin{cases}
C\ep^{-(\frac{1}{p-1}-\frac{N}{2})^{-1}}
&\text{if}\ 1<p<1+\frac{2}{N},
\\
\exp\Big(C\ep^{-(p-1)}\Big)
&\text{if}\ p=1+\frac{2}{N}.
\end{cases}
\end{align*}
This is exactly the same as the case of $\Omega=\R^N$. 
The exterior problem in two-dimension 
seems quite exceptional. 
\end{remark}

The present paper is organized as follows. 
In Section 2, we will prepare 
our cutoff functions and give 
a key property for deriving the upper bound of 
the lifespan of solutions to \eqref{ndw} 
(see Lemma \ref{key} below); 
remark that the essence of the construction 
of family of cutoff functions are due to 
our previous paper \cite{IkedaSobajima3}. 
Section 3 is devoted to prove Theorem \ref{main}
by applying Lemma \ref{key}.

\section{Preliminaries}

The choice of cutoff functions 
is similar to \cite{IkedaSobajima3}.
We fix two kinds of functions $\eta\in C^2([0,\infty))$ 
and $\eta^*\in L^\infty((0,\infty))$ 
as follows, which will be used in the 
cut-off functions:
\[
\eta(s)
\begin{cases}
=1& \text{if}\ s\in [0, 1/2],
\\
\textrm{is decreasing}& \text{if}\ s\in (1/2,1),
\\
=0 & \text{if}\ s\in [1,\infty), 
\end{cases}
\quad
\eta^*(s)
=
\begin{cases}
0& \text{if}\ s\in [0, 1/2),
\\
\eta(s)& \text{if}\ s\in [1/2,\infty).
\end{cases}
\]
\begin{definition}\label{psi}
For $p>1$, we define for $R>0$, 
\begin{align*}
\psi_R(x,t)
&=
[\eta(s_R(x,t))]^{2p'}, 
\quad 
(x,t)\in \Omega\times [0,\infty),
\\
\psi_R^*(x,t)
&=
[\eta^*(s_R(x,t))]^{2p'}, 
\quad 
(x,t)\in \Omega\times [0,\infty).
\end{align*}
with 
\[
s_R(x,t)=\frac{(|x|-1)^2+t}{R}.
\]
We also set
$P(R)=\left\{(x,t)\in \Omega\times [0,\infty)\;;\;
(|x|-1)^2+t\leq R\right\}$. 
\end{definition}
\begin{remark}
The choice $s_R=R^{-1}[(|x|-1)^2+t]$ is essential 
for our problem in an exterior domain
in the sense of \eqref{eq:log-compatible} (see below). 
\end{remark}
Then we have the following lemma.
\begin{lemma}\label{lem:psi}
Let $\psi_R$ and $\psi_R^*$ be as in Definition \ref{psi}.
Then $\psi_R$ satisfies the following properties:
\begin{itemize}
\item[\bf (i)] If $(x,t)\in P(R/2)$, then $\psi_R(x,t)=1$, 
and if $(x,t)\notin P(R)$, then $\psi_R(x,t)=0$.
\item[\bf (ii)] 
There exists a positive constant $C_1$ such that 
for every $(x,t)\in P(R)$, 
\begin{align*}
|\pa_t \psi_R(x,t)|\leq C_1R^{-1}[\psi_R^*(x,t)]^{\frac{1}{p}}.
\end{align*}
\item[\bf (iii)] 
There exists a positive constant $C_2$ such that 
for every $(x,t)\in P(R)$, 
\begin{align*}
|\pa_t^2 \psi_R(x,t)|
\leq 
C_2R^{-2}[\psi_R^*(x,t)]^{\frac{1}{p}}.
\end{align*}
\item[\bf (iv)] 
There exists a positive constant $C_3$ such that 
for every $(x,t)\in P(R)$, 
\begin{align*}
|\nabla \psi_R(x,t)|
\leq C_3R^{-1}|x|(\log|x|)[\psi_R^*(x,t)]^{\frac{1}{p}}.
\end{align*}
\item[\bf (v)] 
There exists a positive constant $C_4$ such that 
for every $(x,t)\in P(R)$, 
\begin{align*}
|\Delta \psi_R(x,t)|\leq C_4R^{-1}
[\psi_R^*(x,t)]^{\frac{1}{p}}.
\end{align*}
\end{itemize}
\end{lemma}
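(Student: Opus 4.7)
The plan is to compute each derivative of $\psi_R$ via the chain rule applied to the composition $\eta^{2p'}\circ s_R$, and then to exploit two simple observations. First, $\eta'$ and $\eta''$ are supported in $[1/2,1]$, which is precisely the interval on which $\eta\equiv\eta^*$; hence any factor of $\eta(s_R)$ that multiplies $\eta'(s_R)$ or $\eta''(s_R)$ may be replaced by $\eta^*(s_R)$ without loss. Second, since $0\le\eta\le 1$, one has $\eta^{2p'-1}\le\eta^{2p'-2}$, and the identity $2p'/p=2p'-2$ gives $[\eta^*(s_R)]^{2p'-2}=[\psi_R^*(x,t)]^{1/p}$. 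With these two observations in hand, every assertion reduces to collecting the correct powers of $R^{-1}$ from the derivatives of $s_R$.

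For the time direction, $\pa_t s_R=R^{-1}$, so the chain rule yields
\[
\pa_t\psi_R=2p'\,R^{-1}\,\eta(s_R)^{2p'-1}\eta'(s_R),
\]
and an analogous $R^{-2}$ expression for $\pa_t^2\psi_R$ involving $(\eta')^2$ and $\eta''$. Applying the two observations immediately delivers (ii) and (iii); assertion (i) is just the definition of $\eta$ together with that of $P(R)$.

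For the spatial derivatives I use $\nabla s_R=\frac{2(|x|-1)}{R}\frac{x}{|x|}$, from which
\[
|\nabla\psi_R|\le C\,R^{-1}(|x|-1)\,[\psi_R^*]^{1/p}.
\]
The target form $|x|\log|x|$ in (iv) is then obtained from the elementary inequality $|x|-1\le|x|\log|x|$ for $|x|\ge 1$: setting $h(r)=r\log r-(r-1)$ one has $h(1)=h'(1)=0$ and $h''(r)=1/r>0$, hence $h\ge 0$ on $[1,\infty)$. This slightly wasteful replacement is precisely what is needed to pair with the test function $\log|x|$ used in Section~3.

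For (v) a direct computation gives $\Delta s_R=\frac{2}{R}(2-|x|^{-1})$ and $|\nabla s_R|^2=\frac{4(|x|-1)^2}{R^2}$. The chain rule then produces three schematic terms, $\eta^{2p'-2}(\eta')^2|\nabla s_R|^2$, $\eta^{2p'-1}\eta''|\nabla s_R|^2$, and $\eta^{2p'-1}\eta'\,\Delta s_R$. On $P(R)$ one has $(|x|-1)^2\le R$, so $|\nabla s_R|^2\le 4/R$, and $|\Delta s_R|\le 4/R$ on $\Omega$. Each contribution therefore carries exactly one power of $R^{-1}$ and, by the two observations above, is absorbed by $[\psi_R^*]^{1/p}$, yielding (v). The only step that is not pure bookkeeping is the convexity inequality in (iv); everything else is tracking exponents, and that is where I would concentrate the care when writing out the details.
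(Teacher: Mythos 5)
Your proposal is correct and follows essentially the same route as the paper, which simply invokes the direct chain-rule computation using $\pa_t s_R=R^{-1}$, $\nabla s_R=\frac{2}{R}(1-\frac{1}{|x|})x$, $\Delta s_R=\frac{2}{R}(2-\frac{1}{|x|})$ together with the inequality $1-\frac{1}{|x|}\leq \log|x|$ on $\Omega$ (your convexity argument for $|x|-1\leq |x|\log|x|$ is just this inequality multiplied by $|x|$). The bookkeeping with $\eta'$, $\eta''$ supported in $[1/2,1]$, the replacement of $\eta$ by $\eta^*$ there, and the identity $2p'/p=2p'-2$ is exactly what the paper's ``direct calculation'' amounts to.
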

\begin{proof}
All of the assertions follow from the direct calculation 
by noticing
\[
\pa_ts_R=
\frac{1}{R}, \quad
\nabla s_R=\frac{2}{R}\left(1-\frac{1}{|x|}\right)x, 
\quad
\Delta s_R=\frac{2}{R}\left(2-\frac{1}{|x|}\right)
\]
and 
\begin{align}\label{eq:log-compatible}
1
-\frac{1}{|x|}\leq \log |x|
\end{align}
for $x\in \Omega$. 
\end{proof}

The following lemma is the key assetion of 
the present paper which is 
similar as 
{\cite[Lemma 2.10]{IkedaSobajima3}}, 
but the situation with logarithmic function 
is included. 

\begin{lemma}\label{key}
Let $\delta>0$, $C_0>0$, $R_1>0$, $\theta\geq 0$, $\kappa\in \R$
and $0\leq w\in L^1_{\rm loc}([0,T);L^1(\Omega))$
for $T>R_1$. Assume that 
for every $R\in [R_1,T)$, 
\begin{align}\label{criterion}
\delta 
+
\iint_{P(R)}
  w(x,t)\psi_R(x,t)
\,dx\,dt
\leq 
C_0R^{-\frac{\theta}{p'}}(\log R)^{\frac{\kappa}{p'}}
\left(
\iint_{P(R)}
  w(x,t)\psi_R^*(x,t)
\,dx\,dt
\right)^{\frac{1}{p}}.
\end{align}
Then $T$ has to be bounded above as follows:
\begin{align*}
T\leq 
\begin{cases}
C\delta^{-\frac{1}{\theta}}(\log(\delta^{-1}))^{\frac{\kappa}{\theta}}
&\text{if}\ \theta>0, \ \kappa\in\R,
\\[5pt]
\exp\left(C\delta^{-\frac{p-1}{1-\kappa(p-1)}}\right)
&\text{if}\ \theta=0, \ \kappa<\frac{1}{p-1},
\\[5pt]
\exp\exp\left(C\delta^{-(p-1)}\right)
&\text{if}\ \theta=0, \ \kappa=\frac{1}{p-1}.
\end{cases}
\end{align*}
\end{lemma}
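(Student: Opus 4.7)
The plan is to reduce the hypothesis \eqref{criterion} to a Bernoulli-type discrete inequality along the dyadic sequence $R_k = 2^k R_1$ and then extract the allowed range of $K$ via a telescoping estimate that mimics the ODE $y'=y^p$.

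First I would exploit the pointwise identity $\psi_R - \psi_R^* = \mathbf{1}_{P(R/2)}$, which is immediate from $\eta\equiv 1$, $\eta^*\equiv 0$ on $[0,1/2)$ and $\eta\equiv\eta^*$ on $[1/2,\infty)$. Writing
\[
A(R) := \iint_{P(R)} w\psi_R\,dx\,dt,\qquad B(R) := \iint_{P(R)} w\psi_R^*\,dx\,dt,
\]
the identity gives $A(R)-B(R) = \iint_{P(R/2)} w\,dx\,dt \geq A(R/2)$, since $\psi_{R/2}\leq 1$. Setting $a_k := \delta + A(R_k)$, we obtain the crucial bound $B(R_k) \leq a_k - a_{k-1}$. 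Raising \eqref{criterion} at $R = R_k$ to the $p$-th power then yields
\[
a_k^p \leq C_0^p R_k^{-\theta(p-1)}(\log R_k)^{\kappa(p-1)}(a_k - a_{k-1}),
\]
i.e., the discrete Bernoulli inequality $a_k - a_{k-1} \geq \tilde c_k\, a_k^p$ with $\tilde c_k := C_0^{-p}R_k^{\theta(p-1)}(\log R_k)^{-\kappa(p-1)}$.

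Comparing $(a_k - a_{k-1})/a_k^p$ with $\int_{a_{k-1}}^{a_k} u^{-p}\,du$ and telescoping yields the universal ceiling
\[
\sum_{k=1}^K \tilde c_k \leq \frac{1}{p-1}\bigl(a_0^{-(p-1)} - a_K^{-(p-1)}\bigr) \leq \frac{1}{(p-1)\delta^{p-1}},
\]
valid for every $K$ with $R_K < T$. The conclusion then follows by analyzing this sum asymptotically in each regime. When $\theta > 0$ the geometric growth of $R_k^{\theta(p-1)}$ makes the sum comparable to its last term, so $R_K^{\theta}(\log R_K)^{-\kappa}\lesssim \delta^{-1}$, which inverts to $T\lesssim \delta^{-1/\theta}(\log\delta^{-1})^{\kappa/\theta}$. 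When $\theta=0$ and $\kappa(p-1)<1$, $\tilde c_k \sim k^{-\kappa(p-1)}$ integrates to $K^{1-\kappa(p-1)}$, forcing $K\lesssim \delta^{-(p-1)/(1-\kappa(p-1))}$ and hence $T \leq 2^K R_1 \leq \exp(C\delta^{-(p-1)/(1-\kappa(p-1))})$. The borderline $\theta=0$, $\kappa(p-1)=1$ gives the harmonic sum $\sum 1/k \sim \log K$, forcing $\log K \lesssim \delta^{-(p-1)}$ and producing the double-exponential bound.

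I expect the delicate step to be the dichotomy $B(R) \leq A(R)-A(R/2)$, which is what converts \eqref{criterion} into a genuine single-sequence difference inequality. The crude bound $B(R)\leq A(R)$ would yield only $\delta \lesssim R^{-\theta}(\log R)^{\kappa}$ and so is useless in the $\theta=0$ borderline; the alternative $B(R)\leq A(2R)$ would require both $R$ and $2R$ to lie in $[R_1,T)$ and so waste the endpoint. Once the dichotomy is in hand, the telescoping and the three-way asymptotic of $\sum \tilde c_k$ are mechanical, though the borderline case $\kappa(p-1)=1$ requires the harmonic-sum identification to produce the iterated exponential.
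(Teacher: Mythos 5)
Your proof is correct, but it takes a discrete route where the paper works continuously. The paper's mechanism for converting \eqref{criterion} into a solvable inequality is a logarithmic averaging: setting $y(r)=\iint_{P(r)}w\psi_r^*$, a Fubini argument over the scale parameter together with the fact that $\eta^*$ is supported in $[1/2,1]$ gives $\int_0^R y(r)r^{-1}dr\le(\log 2)\iint_{P(R)}w\psi_R$, after which $Y(R)=\int_0^R y(r)r^{-1}dr$ satisfies the ODE inequality $(\delta+\tfrac{1}{\log 2}Y)^p\le C_0^pR^{1-\theta(p-1)}(\log R)^{\kappa(p-1)}Y'(R)$; a change of variables $\rho=\int_{\log R_1}^{\log R}e^{\theta(p-1)s}s^{-\kappa(p-1)}ds$ reduces everything to $\rho_T\lesssim\delta^{-(p-1)}$, and the three regimes come from the asymptotics of $\rho_T$ --- exactly the same three-way analysis as your $\sum_k\tilde c_k$. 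Your replacement of this averaging step by the pointwise identity $\psi_R-\psi_R^*=\mathbf{1}_{\{s_R<1/2\}}$ (which agrees with $\mathbf{1}_{P(R/2)}$ up to a null set, so the integrals are unaffected) and the resulting dyadic difference inequality $a_k-a_{k-1}\ge\tilde c_k a_k^p$ is a legitimate and arguably more elementary implementation of the same underlying idea: both exploit that $\psi_R^*$ is supported in the transition annulus $P(R)\setminus P(R/2)$, so that $\iint w\psi_R^*$ is controlled by an increment of $R\mapsto\iint w\psi_R$. What the continuous version buys is that it uses the hypothesis at every scale $R\in[R_1,T)$ at once and avoids the mildly fussy inversion arguments (e.g.\ extracting $R_K\lesssim\delta^{-1/\theta}(\log\delta^{-1})^{\kappa/\theta}$ from $R_K^{\theta}(\log R_K)^{-\kappa}\lesssim\delta^{-1}$ requires the standard bootstrap you gloss over with ``which inverts to''); what your version buys is that it needs no ODE comparison, no change of variables, and only the values $R=2^kR_1$, with the telescoping $\sum_k\tilde c_k\le\frac{1}{(p-1)\delta^{p-1}}$ doing all the work. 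Both yield the stated bounds for $\delta$ small.
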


\begin{proof}[Proof of Lemma \ref{key}]
We set 
\begin{align*}
y(r):=
\iint_{P(r)}
  w(x,t)\psi_r^*(x,t)
\,dx\,dt, \quad r\in (0,T), 
\end{align*}
Then as in \cite[Lemma 2.10]{IkedaSobajima3}, 
we have 
\begin{align*}
\int_{0}^{R}
y(r)
r^{-1}\,dr
&=
\iint_{P(R)}
  w(x,t)
\left(
\int_{((|x|-1)^2+t)/R}^{\infty}
  \left[
  \eta^*\left(s\right)
  \right]^{2p'}
s^{-1}\,ds
\right)
\,dx
\,dt
\\
&\leq 
(\log 2)\iint_{P(R)}
  w(x,t)
  \psi_R(x,t)
\,dx
\,dt.
\end{align*}
Taking 
\[
Y(R)=
\int_{0}^R y(r)r^{-1}\,dr, \quad \rho\in (R_1, T), 
\]
we deduce from \eqref{criterion} that 
for $R\in  (R_1,T)$, 
\begin{align*}
\left(
\delta +\frac{1}{\log 2}
Y(R)
\right)^{p}
&\leq 
C_0^p
R^{-\theta (p-1)}
(\log R)^{\kappa (p-1)}
\iint_{P(R)}
  w(x,t)\psi_R^*(x,t)
\,dx\,dt
\\
&=
C_0^p
R^{1-\theta (p-1)}
(\log R)^{\kappa (p-1)}
Y'(R).
\end{align*}
Putting 
\[
Y(R)=Z\left(\int_{\log R_1}^{\log R} e^{\theta(p-1) s}s^{-\kappa (p-1)}\,ds\right), 
\quad 
0<\rho<\rho_T=\int_{\log R_1}^{\log T} e^{\theta(p-1) s}s^{-\kappa (p-1)}\,ds.
\]
implies that
\begin{equation}
\label{odi}
\frac{d}{d\rho}\Big((\log 2)\delta+Z(\rho)\Big)^{1-p}
\leq -(p-1)(\log 2)^{-p}C_1^{-p}, 
\quad 
\rho\in (0,\rho_T).
\end{equation}
Integrating it over $[\rho_1,\rho_2]\subset (0,\rho_T)$, we  obtain
\[
\rho_2
<
\rho_1+(p-1)^{-1}(\log 2)C_1^p\delta^{-(p-1)}.
\]
Letting $\rho_2 \uparrow \rho_T$ and $\rho_1 \downarrow 0$, 
we find 
\[
\rho_T=\int_{\log R_1}^{\log T} e^{\theta(p-1) s}s^{-\kappa (p-1)}\,ds
\leq 
(p-1)^{-1}(\log 2)C_1^p\delta^{-(p-1)}.
\]
If the function $e^{\theta(p-1) s}s^{-\kappa (p-1)}$ is not integrable at infinity, 
then $T$ has to be finite. 
More precisely, the asymptotics of $\rho_T$ 
for large $T$ is as follows, respectively. 
If $\theta>0$, then 
$\rho_T\approx \frac{1}{(p-1)\theta}
T^{\theta(p-1)}(\log T)^{-\kappa (p-1)}$. 
If $\theta=0$ and $\kappa<\frac{1}{p-1}$, then
$\rho_T\approx \frac{1}{1-\kappa (p-1)}
(\log T)^{1-\kappa (p-1)}$. 
In the rest case $\theta=0$ and $\kappa=\frac{1}{p-1}$, 
we have 
$\rho_T\approx \log \log T$. 
These imply the desired bounds of $T$ for sufficiently small 
$\delta>0$. 
\end{proof}

\section{Proof of Theorem \ref{main}}

We show the assertion for $\tau=0$ and $\tau=1$ simultaneously. 
By the definition of weak solution to \eqref{ndw}, 
we can verify that 
for every $\psi\in C^2(\overline{\Omega}\times[0,T))$ with 
${\rm supp}\,\psi\subset\subset \overline{\Omega}\times [0,T)$ 
and $\psi|_{\pa\Omega}=0$
\begin{align*}
&
\ep 
\int_{\Omega}
  \tau g\psi(0)  
  +
  f\Big(
  -\tau \pa_t\psi(0)
  +e^{i\zeta}\psi(0)\Big)
\,dx
+
\lambda 
\int_0^T
\int_{\Omega}
  |u(t)|^p\psi(t)
\,dx
\,dt 
\\
&=
\int_0^T
\int_{\Omega}
  u(t)\Big(
  -\Delta \psi(t)
  + \tau\pa_t^2\psi(t)
  -e^{i\zeta}\pa_t\psi(t)\Big)
\,dx
\,dt.
\end{align*}
Noting that 
\begin{align*}
\lim_{R\to\infty}
\int_{\Omega}
  \tau g\Phi \psi_R(0)  
  +
  f\Phi\Big(
  -\tau \pa_t\psi_R(0)
  +e^{i\zeta}\psi_R(0)\Big)
\,dx
=
\int_{\Omega}
  \Big(\tau g+e^{i\zeta}f\Big)
\Phi\,dx
\end{align*}
with $\Phi(x)=\log|x|$, 
we see that there exist $R_0>0$, $\xi_0\in (-\frac{\pi}{2},\frac{\pi}{2})$ and $c_0>0$ such that 
for every $R\geq R_0$,
\[
{\rm Re}
\left[
\lambda^{-1}e^{i\xi}\int_{\Omega}
  \tau g\Phi \psi_R(0)  
  +
  f\Phi\Big(
  -\tau \pa_t\psi_R(0)
  +e^{i\zeta}\psi_R(0)\Big)
\,dx
\right]\geq c_0.
\]
Now we assume that $\lifespan(u)>R_0$. 
Since $\Phi$ is independent of $t$, it follows from 
Lemma \ref{lem:psi} that 
\begin{align*}
&
\Big|-\Delta (\Phi\psi_R(t))
+\tau \pa_t^2(\Phi\psi_R(t))
-e^{i\zeta}\pa_t(\Phi\psi_R(t))\Big|
\\
&=
\Big|-2\nabla \Phi\cdot\nabla \psi_R(t)
-\Phi\Delta\psi_R(t)
+\Phi\pa_t^2\psi_R(t)
-e^{i\zeta}\Phi\pa_t\psi_R(t)\Big|
\\
&
\leq 
+
\frac{2C_3}{R}\Phi[\psi_R^*(t)]^{\frac{1}{p}} 
+
\frac{C_4}{R}\Phi[\psi_R^*(t)]^{\frac{1}{p}} 
+
\frac{\tau C_2}{R^2}\Phi[\psi_R^*(t)]^{\frac{1}{p}} 
+
\frac{C_1}{R}\Phi[\psi_R^*(t)]^{\frac{1}{p}} 
\\
&\leq 
\frac{C_5}{R}\Phi[\psi_R^*(t)]^{\frac{1}{p}}
\end{align*}
with $C_5=2C_3+C_4+\tau C_2 R_0^{-1}+C_1$. 
Therefore choosing the 
test function $\psi(x,t)=\Phi(x)\psi_R(x,t)$
which satisfies the Dirichlet boundary condition,  
we obtain 
\begin{align*}
&c_0\ep
+\cos \xi\iint_{P(R)}
   |u(t)|^p\Phi\psi_R(t)
\,dx
\,dt
\\
&\leq 
{\rm Re}
\left[
\lambda^{-1}e^{i\xi_0}
\iint_{P(R)}
   u(t)\Big(\tau \pa_t^2(\Phi\psi_R(t))-\Delta (\Phi\psi_R(t))-e^{i\zeta}\pa_t(\Phi\psi_R(t))\Big)
\,dx
\,dt
\right]
\\
&\leq 
\frac{C_5}{|\lambda| R}
\iint_{P(R)}
|u|\Phi[\psi_R^*(t)]^{\frac{1}{p}}
\,dx
\,dt
\\
&\leq 
\frac{C_5}{|\lambda|R}
\left(
\iint_{P(R)}
\Phi
\,dx
\,dt
\right)^{\frac{1}{p'}}
\left(
\iint_{P(R)}
|u(t)|^p
\Phi\psi_R^*(t)
\,dx
\,dt
\right)^{\frac{1}{p}}.
\end{align*}
Therefore noting that 
\[
\iint_{P(R)}
\Phi
\,dx
\,dt
\leq 
\pi R
(\sqrt{R}+1)^2\log (\sqrt{R}+1).
\]
we deduce
\begin{align*}
&c_0\ep
+\iint_{P(R)}
  |u(t)|^p\Phi\psi_R(t)
\,dx\,dt
\leq 
C_6
R^{1-\frac{2}{p}}(\log R)^{\frac{1}{p'}}
\left(
\iint_{P(R)}
  |u(t)|^p\Phi\psi_R^*(t)
\,dx\,dt
\right)^{\frac{1}{p}}
\end{align*}
for some positive constant $C_6>0$. 
Applying Lemma \ref{key} with $w=|u|^p\Phi$, we have 
the desired estimate for $\lifespan(u)$. 
The proof is complete. 
\qed

\subsection*{Acknowedgements}
This work is partially supported 
by Grant-in-Aid for Young Scientists Research (B) 
No.16K17619 
and 
by Grant-in-Aid for Young Scientists Research (B) 
No.15K17571.



\begin{thebibliography}{30}

\bibitem{Cazenavebook}
T. Cazenave, 
``Semilinear Schr\"odinger equations,'' 
Courant Lecture Notes in Mathematics {\bf 10}, 
New York University, Courant Institute of Mathematical Sciences, New York; 
American Mathematical Society, Providence, RI, 2003.


\bibitem{CHbook} 
    T. Cazenave, A. Haraux, 
    ``An introduction to semilinear evolution equations,'' 
    Translated from the 1990 French original by Yvan Martel and revised by the authors. 
    Oxford Lecture Series in Mathematics and its Applications {\bf 13}. 
    The Clarendon Press, Oxford University Press, New York, 1998.

\bibitem{FI11}
Y. Fujishima, K. Ishige, 
{\it Blow-up for a semilinear parabolic equation with large diffusion on $\R^N$},
J.\ Differential Equations {\bf 250} (2011), 2508--2543.

\bibitem{FI12}
Y. Fujishima, K. Ishige, 
{\it Blow-up for a semilinear parabolic equation with large diffusion on $\R^N$. II}, 
J.\ Differential Equations {\bf 252} (2012), 1835--1861.
 
\bibitem{Fujita66}
H. Fujita, 
{\it On the blowing up of solutions of the Cauchy problem for $u_t=\Delta u+u^{1+\alpha}$}, 
J.\ Fac.\ Sci.\ Univ.\ Tokyo Sect.\ I {\bf 13} (1966), 109--124.

\bibitem{FO16}
K. Fujiwara, T. Ozawa, 
{\it Finite time blowup of solutions to the nonlinear Schr\"odinger equation without gauge invariance}, 
J.\ Math.\ Phys.\ {\bf 57} (2016), 082103, 8 pp.

\bibitem{Hayakawa73}
K. Hayakawa, 
{\it On nonexistence of global solutions of some semilinear parabolic differential equations}, 
Proc.\ Japan Acad.\ {\bf 49} (1973), 503--505.

\bibitem{II15}
M. Ikeda, T. Inui, 
{\it Small data blow-up of $L^2$ or $H^1$-solution for the semilinear Schr\"odinger equation without gauge invariance}, 
J.\ Evol.\ Equ.\ {\bf 15} (2015), 571--581.

\bibitem{IO16}
    M. Ikeda, T. Ogawa, 
    {\it Lifespan of solutions to the damped wave equation with a critical nonlinearity},
    J.\ Differential Equations {\bf 261} (2016), 1880--1903.

\bibitem{IkedaSobajima3}
M. Ikeda, M. Sobajima, 
{\it Upper bound for lifespan 
of solutions to certain semilinear parabolic, 
dispersive and hyperbolic equations  
via a unified test function method}, \texttt{arXiv:1710.06780}.

\bibitem{IW13}
M. Ikeda, Y. Wakasugi, 
{\it Small-data blow-up of $L^2$-solution for the nonlinear Schr\"odinger equation without gauge invariance}, 
Differential Integral Equations {\bf 26} (2013), 1275--1285.

\bibitem{IW15}
M. Ikeda, Y. Wakasugi, 
{\it A note on the lifespan of solutions to the semilinear damped wave equation},
Proc.\ Amer.\ Math.\ Soc.\ {\bf 143} (2015), 163--171.

\bibitem{Ikehata05exterior}
R. Ikehata, 
{\it Two dimensional exterior mixed problem for semilinear damped wave equations},  J.\ Math.\ Anal.\ Appl.\ {\bf 301} (2005), 366--377. 

\bibitem{KM13}
T. Kobayashi, M. Misawa, 
{\it $L^2$ boundedness for the $2$D exterior problems for the semilinear heat and dissipative wave equations}, 
Harmonic analysis and nonlinear partial differential equations, 
1--11, RIMS K$\hat{\rm o}$ky$\hat{\rm u}$roku Bessatsu, {\bf B42}, 
Res.\ Inst.\ Math.\ Sci.\ (RIMS), Kyoto, 2013.

\bibitem{KST77}
K. Kobayashi, T. Sirao, H. Tanaka, 
{\it On the growing up problem for semilinear heat equations}, 
J.\ Math.\ Soc.\ Japan {\bf 29} (1977), 407--424.

\bibitem{LY17}
    N. Lai, S. Yin, 
    {\it Finite time blow-up for a kind of initial-boundary value problem of semilinear damped wave equation}, 
    Math.\ Methods Appl.\ Sci.\ {\bf 40} (2017), 1223--1230. 

\bibitem{LZarxiv}
N.-A. Lai, Y. Zhou, 
{\it The sharp lifespan estimate for semilinear damped wave equation with Fujita critical power in high dimensions}, \texttt{arXiv:1702.07073}.

\bibitem{LN92}
T.-Y. Lee, W.-M. Ni, 
{\it Global existence, large time behavior and life span of solutions of a semilinear parabolic Cauchy problem}, 
Trans.\ Amer.\ Math.\ Soc.\ {\bf 333} (1992), 365--378.

\bibitem{LZ95}
T.T. Li, Y. Zhou, 
{\it Breakdown of solutions to 
$\square u+u_t=|u|^{1+\alpha}$}, 
Discrete Contin.\ Dynam.\ Systems {\bf 1} (1995), 503--520.

\bibitem{MY96}
N. Mizoguchi, E. Yanagida, 
{\it Blow-up of solutions with sign changes for a semilinear diffusion equation}, 
J.\ Math.\ Anal.\ Appl.\ {\bf 204} (1996), 283--290.

\bibitem{MY98}
N. Mizoguchi, E. Yanagida, 
{\it Blowup and life span of solutions for a semilinear parabolic equation}, 
SIAM J.\ Math.\ Anal.\ {\bf 29} (1998), 1434--1446.

\bibitem{Nishihara03Ibaraki}
K. Nishihara, $L^p$-$L^q$ estimates for the 3-D damped wave equation and their application to the semilinear problem, Seminar Notes of Math. Sci., 6, Ibaraki Univ., 2003, 69--83.

\bibitem{OOParxiv}
T. Oh, M. Okamoto, O. Pocovnicu, 
{\it On the probabilistic well-posedness of the nonlinear Schr\"odinger equations with non-algebraic nonlinearities}, \texttt{arXiv:1708.01568}.

\bibitem{OT09}
T. Ogawa, H. Takeda, 
{\it Non-existence of weak solutions to nonlinear damped wave equations in exterior domains}, Nonlinear Anal.\ {\bf 70} (2009), 3696--3701.

\bibitem{TY01}
G. Todorova, B. Yordanov, 
{\it Critical exponent for a nonlinear wave equation with damping}, 
J.\ Differential Equations {\bf 174} (2001), 464--489.

\bibitem{Tsutsumi83}
Y. Tsutsumi, 
{\it Global solutions of the nonlinear Schr\"odinger equation in exterior domains}, 
Comm.\ Partial Differential Equations {\bf 8} (1983), 1337--1374. 

\bibitem{Zhang01}
Q.S. Zhang, 
{\it A blow-up result for a nonlinear wave equation with damping: the critical case}, 
C.\ R.\ Acad.\ Sci.\ Paris S\'er.\ I Math.\ {\bf 333} (2001), 109--114.

\end{thebibliography}

\end{document}